\setlist[enumerate,1]{label=(\arabic*),font=\textup,
leftmargin=7mm,labelsep=1.5mm,topsep=0mm,itemsep=-0.8mm}
\setlist[enumerate,2]{label=(\alph*),font=\textup,
leftmargin=7mm,labelsep=1.5mm,topsep=-0.8mm,itemsep=-0.8mm}
\newtheorem{theorem}{Theorem}[section]
\newtheorem{lemma}{Lemma}[section]
\theoremstyle{definition}
\newtheorem{definition}{Definition}[section]
\numberwithin{equation}{section}
\begin{document}

	\begin{frontmatter}
		\title{The planar Tur\'an number of $\{K_4,\Theta_5\}$\,\tnoteref{titlenote}}
		
		\author{Tao Fang}
		\ead{tao2021@shu.edu.cn}
		

		\address{Department of Mathematics, Shanghai University, Shanghai 200444, P.R. China}
				
		\begin{abstract}
		  Let $\mathcal{F}$ be a set of graphs. The planar Tur\'an number, $ex_{\mathcal{P}}(n,\mathcal{F})$, is the maximum number of edges in an $n$-vertex planar graph which does not contain any member of $\mathcal{F}$ as a subgraph. In this paper, we give upper bounds of $ex_{\mathcal{P}}(n,\{K_4,\Theta_5\})\leqslant25/11(n-2)$.  We also give constructions which show the bounds are tight for infinitely many graphs.
		\end{abstract}
		
		\begin{keyword}
			Planar Tur\'an number\sep
            Theta graph\sep
            Extremal planar graph
			\MSC[2010]
			05C05
            05C35
		\end{keyword}
	\end{frontmatter}
	
\section{Introduction}\label{sec1}
The concept of planar Tur\'an numbers was introduced by Dowden \cite{Dowden2016} in 2016. Let $\mathcal{F}$ be a family of graphs, and the planar Tur\'an number of $\mathcal{F}$, denoted by $ex_{\mathcal{P}}(n,\mathcal{F})$, is the maximum number of edges in a $n$-vertices planar graph  that does not contain any subgraph isomorphic to a graph in $\mathcal{F}$. When $\mathcal{F}=F$, we write it as $ex_{\mathcal{P}}(n, F)$. By the Euler's formula for planar graphs, we know that the maximum number of edges in a planar graph with $n\geqslant3$ vertices is $3n-6$. When $F$ is non-planar, $ex_{\mathcal{P}}(n, F)=3n-6$.

In 2016, Dowden \cite{Dowden2016} determined the planar Tur\'an problems of $K_3,K_4,C_4$ and $C_5$.
\begin{theorem}[\cite{Dowden2016}]
Let $n$ be a positive integer.
\begin{itemize}
\item[1.] When $n\geqslant3$, $ex_{\mathcal{P}}(n, K_3)=2n-4$;
\item[2.] When $n\geqslant3$, $ex_{\mathcal{P}}(n, K_4)=3n-6$;
\item[3.] When $n\geqslant4$, $ex_{\mathcal{P}}(n, C_4)\leqslant15(n-2)/7$;
\item[4.] When $n\geqslant11$, $ex_{\mathcal{P}}(n, C_5)\leqslant(12n-33)/5$.
\end{itemize}
\end{theorem}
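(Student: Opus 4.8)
My plan is to treat all four parts through a single mechanism: Euler's formula plus a count of short faces, with the forbidden subgraph entering only through local restrictions on how short faces may meet. Throughout I would first reduce to the $2$-connected case, where every face boundary is a cycle so that ``length'' is unambiguous; cut-vertices, bridges and vertices of degree $\leq 2$ only lower the edge density and can be peeled off by a routine induction. Writing $f_i$ for the number of faces of length $i$, I would use $\sum_i f_i = f$ and $\sum_i i\,f_i = 2e$ together with $n-e+f=2$ to turn each target edge bound into an inequality among the $f_i$, and then supply the needed control on the small $f_i$ from the excluded graph.

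For part (1), triangle-freeness gives girth at least $4$, so every face has length $\geq 4$ and $2e=\sum_i i f_i\geq 4f$; substituting $f=e-n+2$ yields $e\leq 2n-4$, with $K_{2,n-2}$ showing tightness. For part (2) the upper bound is already quoted in the excerpt: every $n$-vertex planar graph has at most $3n-6$ edges, so I only need a $K_4$-free triangulation on $n$ vertices. The bipyramid over $C_{n-2}$ (two non-adjacent apexes joined to every vertex of an $(n-2)$-cycle) is such a graph for $n\geq 6$, and the triangle settles $n=3$ (the values $n=4,5$ are genuine small exceptions one must check separately, since every $5$-edge-per-se triangulation on so few vertices already contains a $K_4$). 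For part (3), $C_4$-freeness forbids two triangular faces from sharing an edge, since the two triangles would close up into a $4$-cycle; hence the triangular faces are pairwise edge-disjoint and $3f_3\leq e$. It also forbids any face of length $4$, so every non-triangular face has length $\geq 5$ and $2e\geq 3f_3+5(f-f_3)$. Combining $3f_3\leq e$, this inequality, and $f=e-n+2$, then eliminating $f$ and $f_3$, gives exactly $e\leq 15(n-2)/7$.

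Part (4) is where the real work lies, and I expect it to be the main obstacle. From $C_5$-freeness I would extract three local facts about a $2$-connected plane graph: there is no face of length $5$; no triangular face shares an edge with a quadrilateral face, since the two would complete a $5$-cycle through the two private vertices of the quadrilateral; and no vertex is the centre of three consecutive triangular faces, since such a fan contains a $5$-cycle. In particular every triangular face has at least two of its edges on faces of length $\geq 6$. I would then run a discharging argument, assigning each face $f$ an initial charge $\mathrm{len}(f)-c$ for the constant $c$ making the total charge a positive multiple of $7e-12f$, and move charge from the large ($\geq 6$) faces to the triangular faces across shared edges; the three adjacency restrictions guarantee every triangle can collect its deficit while no large face is overdrawn. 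The desired bound is equivalent to $7e-12f\geq 9$, so a mere surplus is not enough: the additive slack must be located, and it is exactly the degenerate regimes with few large faces (near-triangulations, which a large $C_5$-free graph cannot be, and near-quadrangulations, where $e=2n-4$) that have to be analysed by hand, and these boundary configurations together with the constant $-33$ are what force the hypothesis $n\geq 11$.

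Thus parts (1)--(3) I expect to be short and essentially forced, while the genuinely delicate step is the discharging and case analysis for part (4); a secondary task is exhibiting the extremal families that make the first three bounds tight and verifying the handful of small-order exceptions (notably $n=4,5$ in part (2)).
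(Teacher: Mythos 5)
This statement is quoted from Dowden's paper and is not proved anywhere in the present manuscript, so there is no in-paper argument to compare against; I can only assess your proposal on its own terms. Parts (1)--(3) are essentially complete and are in fact Dowden's own arguments: girth~$4$ gives $2e\geqslant 4f$ for (1); for (3) the two counts $3f_3\leqslant e$ (no two triangular faces share an edge) and $2e\geqslant 3f_3+5(f-f_3)$ (no $4$-face) combine with Euler's formula to give exactly $15(n-2)/7$. Your observation that the theorem as transcribed here fails for $n=4,5$ in part (2) is correct and worth making explicit: every plane triangulation on $4$ or $5$ vertices contains $K_4$, so $ex_{\mathcal{P}}(4,K_4)=5$ and $ex_{\mathcal{P}}(5,K_4)=8$; the bipyramid over $C_{n-2}$ settles $n\geqslant 6$. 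The block/cut-vertex reduction you wave at is routine but not free, since the bound $15(n-2)/7$ is violated by a single edge ($n=2$), so the induction must be run with a connected-graph version of the statement rather than block-by-block verbatim.

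The genuine gap is in part (4), and it is twofold. First, the local lemma you rely on --- a triangular face sharing an edge with a quadrilateral face forces a $C_5$ --- is only valid when the five boundary vertices are distinct; in a plane graph the third vertex of the triangle may coincide with a vertex of the quadrilateral, in which case the walk you describe degenerates and no $C_5$ arises. Handling these identifications (and the analogous degeneracies in the ``three consecutive triangles at a vertex'' claim) is precisely where the length of Dowden's $C_5$ argument comes from, and it cannot be dismissed. Second, the discharging step is only a plan: you correctly reduce the target to $7e-12f\geqslant 9$ and correctly note that a bare nonnegativity argument cannot produce the additive constant $9$, but the analysis of the boundary regimes that you defer (near-quadrangulations and configurations with few long faces, which is where the hypothesis $n\geqslant 11$ enters) is the actual content of the proof. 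As written, part (4) is a credible strategy, not a proof.
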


In 2019, Lan et al. \cite{Lan2019T} obtained an upper bound for the planar Tur\'an number of $C_6$. This result was quickly generalized by Ghosh et al. \cite{Ghosh2022C}, where they give a sharp upper bound for $ex_{\mathcal{P}}(n, C_6)$.
\begin{theorem}[\cite{Ghosh2022C}]
Let $n~(\geqslant18)$ be a positive integer. Then
$$ex_{\mathcal{P}}(n, C_6)\leqslant \frac{5}{2}n-7.$$
\end{theorem}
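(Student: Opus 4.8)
The plan is to prove the equivalent statement that a $2$-connected $C_6$-free plane graph on $n$ vertices has at most $\frac{3}{2}n-5$ faces, which by Euler's formula $n-e+f=2$ is exactly the claim $e\le\frac{5}{2}n-7$ (more faces means more edges, so bounding $f$ bounds $e$). First I would reduce to the well-behaved case. Deleting a vertex of degree at most $2$ removes one vertex and at most two edges, and since $\frac{5}{2}\cdot 1>2$ this strictly improves the target ratio, so a minimal counterexample has minimum degree $\ge 3$. Splitting a non-$2$-connected graph at a cut vertex into $G_1,G_2$ with $n_1+n_2=n+1$ and adding the bounds gives $e\le\frac{5}{2}(n+1)-14=\frac{5}{2}n-\frac{23}{2}\le\frac{5}{2}n-7$, so a minimal counterexample is $2$-connected, with the hypothesis $n\ge 18$ absorbing the base cases. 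In a $2$-connected plane graph every face is bounded by a cycle, so each face has length $\ge 3$, and since $G$ contains no $C_6$ no face has length exactly $6$.

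Next I would establish the structural lemmas that $C_6$-freeness forces on the short faces, since these carry the negative charge. The key local lemma is that at most three triangular faces may occur consecutively around a vertex: a fan of four triangular faces $vv_1v_2,\,vv_2v_3,\,vv_3v_4,\,vv_4v_5$ contains the forbidden $6$-cycle $v_1v_2v_3v_4v_5v$. More refined lemmas of the same flavour are needed to bound the number of triangular and quadrilateral faces incident to a vertex or adjacent to a given longer face, because a $6$-cycle can also be assembled from two triangles joined by a path, from a triangle sharing an edge with a longer face, or from a quadrilateral face together with a nearby short face. Each such forbidden configuration translates into a restriction on how densely the short faces can be packed.

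I would then run a discharging argument. Assign to each face $F$ the charge $\ell(F)-4$ and to each vertex the charge $+1$; using $2e=\sum_F\ell(F)$ and $f=e-n+2$ the total charge is the identity $5n-2e-8$, so the bound is equivalent to showing the total charge is at least $6$. Triangular faces, with charge $-1$, are the only faces with negative charge, and since no face has length $6$ the deficit must be supplied by the vertices and by faces of length $\ge 5$ (whose charge is $\ge 1$). The redistribution rule sends charge from each vertex to its incident triangular faces and from each face of length $\ge 5$ to the triangular faces adjacent to it across an edge, with the fractions (roughly $\frac{1}{3}$ per incidence) chosen so that, using the clustering lemmas, every triangular face reaches charge $\ge 0$ while every vertex and every long face stays nonnegative; tracking a surplus of $6$ retained at the outer face and at low-degree vertices yields $5n-2e-8\ge 6$, i.e.\ the theorem.

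The main obstacle I anticipate lies in the discharging case analysis, specifically in controlling vertices of moderate degree and the role of quadrilateral faces. A degree-$5$ vertex surrounded by three triangles and two further short faces already sits exactly at the threshold, and combinatorially a degree-$6$ vertex can be incident to four triangular faces (pattern of three consecutive triangles plus one isolated), so the naive rule of giving $\frac{1}{3}$ to each incident triangle overspends the vertex's unit budget. Closing this gap requires sharper structural lemmas showing that such locally dense configurations either cannot occur in a $C_6$-free graph or force a compensating long face nearby. Quadrilateral faces are delicate for the same reason: they carry zero charge yet combine readily with triangles to form a $C_6$, so they must be tracked explicitly rather than ignored. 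Once these local configurations are fully classified and the discharging constants tuned, the bound $e\le\frac{5}{2}n-7$ follows, with the small additive constant and the hypothesis $n\ge 18$ accounting for the boundary of the discharging and the base cases of the reduction.
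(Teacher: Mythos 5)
This statement is quoted in the paper from \cite{Ghosh2022C}; the paper itself gives no proof of it, so there is no in-paper argument to compare against. Judged on its own terms, your proposal is a plan rather than a proof, and the gap is exactly where you locate it. The bookkeeping is fine: the charge $\ell(F)-4$ on faces and $+1$ on vertices does total $5n-2e-8$, and the bound is equivalent to total charge at least $6$; the fan-of-four-triangles lemma is a correct $C_6$. But the entire content of the theorem lives in the part you leave open: the discharging rules are never fixed, the ``sharper structural lemmas'' are neither stated nor proved, and your own example of a degree-$6$ vertex incident to four triangular faces shows the rule you propose (roughly $\tfrac13$ per incidence) fails. A discharging proof is only a proof once every local configuration is classified and verified; acknowledging that this classification is missing does not supply it. There is also a smaller but genuine hole in the reduction step: splitting at a cut vertex and ``adding the bounds'' presumes the bound $e\leqslant\tfrac52 n-7$ holds for each piece, which is false for small pieces (a triangle has $3>\tfrac52\cdot3-7$), so the base cases are not ``absorbed'' by $n\geqslant18$ — they require a separate amortized accounting, of the kind this paper carries out with the quantities $25n''-11e''-25$ per block.

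It is worth noting that the published proof of this theorem proceeds differently: rather than vertex--face discharging, Ghosh et al.\ use the triangular-block decomposition — the same machinery Definition 2.3 and equations (2.1)--(2.2) of the present paper set up — classifying the possible blocks of a $C_6$-free plane graph, bounding a linear combination of $f(B)$ and $e(B)$ for each block type, and summing. That approach localizes the case analysis to a finite list of blocks and the faces around them, which is precisely what sidesteps the moderate-degree-vertex difficulties you identify. If you want to complete your route, you would need to prove the clustering lemmas and exhibit a full set of discharging rules with a verified case analysis; as written, the argument does not establish the bound.
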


A graph on at least $4$ vertices is a Theta graph if it can be obtained from a cycle by adding
an additional edge joining two non-consecutive vertices. For any positive integer $k\geqslant4$, let $\Theta_k$ be the family of non-isomorphic Theta graphs on $k$ vertices. Note that the graph families $\Theta_4$ and $\Theta_5$ contain only one graph. In this article, we use the symbols $\Theta_4$ and $\Theta_5$ to represent the unique graphs in the graph family $\Theta_4$ and $\Theta_5$. In 2019, Lan et al. \cite{Lan2019T} studied the plane Tur\'an problem of $\Theta_4$, $\Theta_5$ and $\Theta_6$, and obtained the sharp upper bound of $\Theta_4$ and $\Theta_5$.

\begin{theorem}[\cite{Lan2019T}]
Let $n$ be a positive integer.
\begin{itemize}
\item[1.] When $n\geqslant4$, $ex_{\mathcal{P}}(n, \Theta_4)\leqslant12(n-2)/5$, with equality when $n\equiv12(\text{mod}20)$;
\item[2.] When $n\geqslant5$, $ex_{\mathcal{P}}(n, \Theta_5)\leqslant5(n-2)/2$, with equality when $n\equiv50(\text{mod}120)$;
\item[3.] When $n\geqslant6$, $ex_{\mathcal{P}}(n, \Theta_6)\leqslant 18(n-2)/7$, with equality when $n=9$.
\end{itemize}
\end{theorem}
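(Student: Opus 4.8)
The plan is to obtain all three bounds by one scheme: reduce to a well-behaved embedding, convert the edge bound into a weighted face count via Euler's formula, and then control the small faces using local consequences of $\Theta_k$-freeness. Since each target has the shape $ex_{\mathcal{P}}(n,\Theta_k)\le c_k(n-2)$ with $c_k\in\{12/5,5/2,18/7\}$, all strictly larger than $2$, deleting a vertex of degree at most $2$ and inducting on $n$ preserves the inequality (the loss $c_k$ exceeds the gain $2$); and a standard block decomposition, in which gluing at a cut vertex gives $c_k(n_1-2)+c_k(n_2-2)=c_k(n-3)<c_k(n-2)$, reduces the problem to the $2$-connected case. I may therefore assume $G$ is $2$-connected with $\delta(G)\ge 3$, so that in a fixed plane embedding every face is bounded by a cycle. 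Writing $f_i$ for the number of $i$-faces and using $e=\tfrac12\sum_i i f_i$ together with $n-2=\sum_i\tfrac{i-2}{2}f_i$, the target $be\le a(n-2)$ (with $a/b=c_k$) becomes the purely combinatorial inequality
\begin{equation*}
\sum_{i\ge 3}\bigl(a(i-2)-b\,i\bigr)f_i\ \ge\ 0 ,
\end{equation*}
which reads $\sum_i(7i-24)f_i\ge0$ for $\Theta_4$, $\sum_i(3i-10)f_i\ge0$ for $\Theta_5$, and $\sum_i(11i-36)f_i\ge0$ for $\Theta_6$. In every case the weight is negative only for triangular faces ($i=3$), so the whole problem is to show that the deficit carried by triangles is paid for by the surplus of the larger faces.

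The engine is a dictionary between $\Theta_k$-freeness and forbidden local adjacencies. The first observation is that a triangular face sharing an edge $uv$ with an adjacent $j$-face produces three internally disjoint $u$--$v$ paths of lengths $1,2,j-1$, i.e.\ a theta graph on $j+1$ vertices. This yields at once: if $G$ is $\Theta_4$-free then no two triangles share an edge; if $G$ is $\Theta_5$-free then no triangle is edge-adjacent to a $4$-face; and if $G$ is $\Theta_6$-free then no triangle is edge-adjacent to a $5$-face (and, by the same device applied to two quadrilaterals, no two $4$-faces are adjacent). A sharper lemma controls clustering: three triangular faces occurring consecutively around a vertex $v$, say $vab,vbc,vcd$, give the internally disjoint paths $vb$, $v\,a\,b$, $v\,d\,c\,b$ of lengths $1,2,3$, hence a $\Theta_5$, unless the configuration degenerates, in which case it closes up to a $K_4$ on $\{v,a,b,c\}$.

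With these lemmas the count for $\Theta_4$ is immediate: every edge of a triangle borders a face of size $\ge4$, so grouping by that larger face gives $3f_3\le\sum_{s\ge4}s\,f_s$, and since $s\le 7s-24$ for all $s\ge4$ this is exactly $\sum_{i\ge3}(7i-24)f_i\ge0$. The difficulty, and where I expect the real work, is that for $\Theta_5$ and $\Theta_6$ the clustering lemma does \emph{not} force triangles apart: under $\Theta_5$-freeness three consecutive triangles are still allowed in the degenerate $K_4$ form, and under $\Theta_6$-freeness three consecutive triangles merely create a (permitted) $\Theta_5$, so triangles may agglomerate into dense blobs. Thus the bare edge-count must be upgraded to a genuine discharging argument: I would assign each face its weight $a(i-2)-bi$ and let every large face distribute its surplus across its triangle-bordering edges, with dedicated rules that bound how many triangles a blob can borrow from, using that triangles avoid $4$-faces (for $\Theta_5$) respectively $5$-faces and that $4$-faces are pairwise non-adjacent (for $\Theta_6$). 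Verifying that these rules leave every face with nonnegative final charge is the technical heart, and the $\Theta_6$ case, which must simultaneously track triangles, quadrilaterals, and faces of size $\ge6$, is the most delicate.

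Finally, tightness. Equality in the face inequality forces a rigid face profile in which the surplus of the large faces is spent exactly on the triangles (for $\Theta_4$, triangles balanced against quadrilaterals via $-3f_3+4f_4=0$; for $\Theta_5$ and $\Theta_6$, triangles balanced against pentagons and larger faces), with every large face surrounded by small ones in the sharp pattern. I would realize this by a periodic planar gadget that tiles such patches and can be repeated indefinitely, producing the stated residue classes (for instance $n\equiv 50 \pmod{120}$ for $\Theta_5$, and the analogous values for $\Theta_4$ and $\Theta_6$); the remaining task is the direct verification that each gadget contains no member of $\Theta_k$ and attains $e=c_k(n-2)$, which is routine but must be done configuration by configuration.
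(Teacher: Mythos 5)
This statement is quoted from Lan, Shi and Song \cite{Lan2019T}; the present paper contains no proof of it, so your attempt can only be measured against the general method that the cited paper (and this paper's own Lemma 3.2) uses, namely an Euler-formula weight count over faces combined with local adjacency restrictions forced by $\Theta_k$-freeness. Your overall scheme --- reduce to minimum degree $3$ and $2$-connectivity, rewrite $be\le a(n-2)$ as $\sum_i(a(i-2)-bi)f_i\ge 0$, and pay for the negative weight of triangular faces using forbidden face adjacencies --- is the right one, and your $\Theta_4$ case is essentially complete. But the proposal has genuine gaps beyond that case. The central one is that for $\Theta_5$ and $\Theta_6$ you explicitly defer the discharging: since $K_4$ and larger triangulated blocks are \emph{not} excluded by $\Theta_5$- or $\Theta_6$-freeness, triangles can agglomerate, and the rules by which large faces ``distribute surplus to triangle-bordering edges'' are never specified, let alone verified to leave every face nonnegative. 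That verification is not routine bookkeeping; it is where all the work in \cite{Lan2019T} lives (and it is precisely why the present paper forbids $K_4$ alongside $\Theta_5$, which collapses every triangular block to $K_2$, $K_3$ or $B_{4,a}$ and makes the count in Lemma 3.2 short). As written, parts 2 and 3 are a plan, not a proof.

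Two further points need repair. Your dictionary lemma is stated too strongly: a triangular face $uvw$ sharing the edge $uv$ with a $j$-face need not produce $\Theta_{j+1}$, because $w$ may itself lie on the boundary of the $j$-face, in which case the paths of lengths $2$ and $j-1$ share the internal vertex $w$ and are not internally disjoint. (Concretely: take the triangle $uvw$ and attach triangles $uaw$ and $wbv$ on its outside; the outer face $u\,a\,w\,b\,v$ is a $5$-face sharing $uv$ with the triangle and passing through $w$.) These degenerate incidences must be excluded or analyzed separately before any of the counting inequalities such as $3f_3\le\sum_{s\ge 4}sf_s$ can be asserted. Finally, the equality claims in all three parts require explicit infinite families of extremal graphs for the stated residue classes; you describe the face profile that equality would force but exhibit no construction, so the ``with equality'' clauses remain unsupported.
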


In 2020, Ghosh et al. \cite{Ghosh2020T} determined the upper bound of $\Theta_6$, and proved that the upper bound is sharp.
\begin{theorem}[\cite{Ghosh2020T}]
When $n\geqslant14$,
$$ex_{\mathcal{P}}(n, \Theta_6)\leqslant \frac{18}{7}n-\frac{48}{7}.$$
\end{theorem}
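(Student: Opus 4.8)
The plan is to prove the bound by a discharging argument on a plane embedding of an edge-maximal $\Theta_6$-free graph $G$, following the template Dowden used for $C_5$ and that underlies the earlier $\Theta_5$ and $C_6$ results. First I would reduce to the case that $G$ is $2$-connected: if $G$ has a cut vertex I decompose it into blocks, each of which is again planar and $\Theta_6$-free, apply the bound blockwise, and add the inequalities, the shared cut vertices making the combined estimate at least as strong. So assume $G$ is $2$-connected with $n$ vertices, $m$ edges and $f$ faces, so that every face is bounded by a cycle and Euler's formula gives $f=m-n+2$.

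Next I set up the charges. Assign to each face $F$ the initial charge $\omega(F)=11\deg(F)-36$, where $\deg(F)$ is the length of the boundary cycle of $F$. Summing and using $\sum_F\deg(F)=2m$ together with Euler's formula gives $\sum_F\omega(F)=22m-36f=-14m+36n-72$, so the desired bound $7m\le 18n-48$ is exactly equivalent to $\sum_F\omega(F)\ge 24$. Under this charge a $3$-face carries $-3$, while a $4$-face carries $+8$, a $5$-face $+19$, and every larger face more; thus only the triangular faces are in deficit, and the whole task is to show that the surplus stored on the faces of degree $\ge 4$ both covers every triangle and leaves a reserve of at least $24$.

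The structural input from $\Theta_6$-freeness is what makes this possible, and extracting it is where I expect to spend most of the effort. The family has two members, $\Theta_{1,2,4}$ (a triangle and a pentagon sharing an edge) and $\Theta_{1,3,3}$ (two $4$-cycles sharing an edge), each on six vertices. I would first prove a short list of local lemmas: no two $4$-faces share an edge, since two $4$-cycles meeting in a single edge form $\Theta_{1,3,3}$; a $3$-face adjacent to a $4$-face is harmless in isolation but constrains the remaining incidences, bounding how many $3$-faces a $4$-face can touch; and, crucially, the $3$-faces cannot cluster without bound. Since each edge lies on exactly two faces, at most two triangles meet at an edge, so the triangular faces tile maximal regions whose adjacency structure has bounded branching; and a region of six or more vertices tiled only by triangles always contains a $\Theta_{1,2,4}$ or $\Theta_{1,3,3}$ (already the hexagonal wheel $W_6$ does, via the spoke $cv_1$, the path $cv_2v_1$ and the path $cv_4v_5v_6v_1$). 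Hence every triangular block is small and must abut enough faces of degree $\ge 4$. The discharging rule would then be that each face of degree $\ge 4$ sends a fixed amount across each incident edge to a neighbouring triangle, calibrated so that every $3$-face receives its full deficit of $3$ while no larger face is overdrawn below $0$; the residual reserve of at least $24$ is tracked through the argument (for instance the large faces are never fully spent, so a bounded positive surplus always survives).

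The main obstacle, as in all planar Tur\'an arguments of this kind, is the case analysis underpinning the structural lemmas: I must enumerate the possible ways triangular blocks can sit inside the embedding, verify in each case that neither $\Theta_{1,2,4}$ nor $\Theta_{1,3,3}$ appears, and check that the surrounding faces of degree $\ge 4$ supply exactly the needed charge. Improving the additive constant from the earlier $-36/7$ to the sharp $-48/7$ is precisely what forces this finer bookkeeping rather than a crude face count, and it is also what produces the threshold $n\ge 14$, below which the small sporadic configurations must be handled separately; the extremal construction promised in the statement then certifies that the reserve cannot be enlarged, so the analysis has no slack to surrender.
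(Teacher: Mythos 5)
First, note that the paper you are working from does not prove this statement at all: it is quoted from Ghosh, Gy\H ori, Paulos, Xiao and Zamora \cite{Ghosh2020T} as background, so there is no in-paper proof to compare against. Judged on its own terms, your plan has the right general shape and the charge arithmetic is correct ($\omega(F)=11\deg(F)-36$ does make the bound equivalent to $\sum_F\omega(F)\geqslant 24$, with $3$-faces at $-3$ and $4$-faces at $+8$), and you correctly identify the two members $\Theta_{1,2,4}$ and $\Theta_{1,3,3}$ of the family. But two steps fail as written. The reduction to the $2$-connected case by ``applying the bound blockwise'' is false: a block that is a single edge has $m=1$ while $\frac{18}{7}\cdot 2-\frac{48}{7}=-\frac{12}{7}$, and a triangle block has $m=3$ while $\frac{18}{7}\cdot 3-\frac{48}{7}=\frac{6}{7}$, so small blocks violate the inequality and summing it over blocks proves nothing. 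This is not a technicality; it is exactly why the published arguments (and the main theorem of the present paper) first delete vertices of small degree and then run a separate additive accounting over blocks of sizes $2,3,4$ versus $\geqslant 5$, and it is where the threshold $n\geqslant 14$ actually comes from.

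Second, the core of the proof --- the discharging rules and the verification that every $3$-face is paid off while a global reserve of $24$ survives --- is asserted rather than carried out. Since the bound is tight, the extremal graphs have total charge exactly $24$, so there is no slack: you must specify which faces send how much across which edges and identify where the surplus of $24$ sits (in such arguments it is typically the unbounded face), and the structural lemmas you list (no two $4$-faces sharing an edge, triangular regions on $\geqslant 6$ vertices forcing a $\Theta_6$) are necessary but far from sufficient to close the count. For reference, the actual proof in \cite{Ghosh2020T}, like the proof of Lemma 3.2 in the present paper, is organized not by face discharging but by the triangular-block decomposition: one classifies the admissible blocks $B$, shows each satisfies an inequality of the form $\alpha f(B)-\beta e(B)\leqslant 0$ using the lengths of the faces its boundary edges can meet, and sums over blocks via (\ref{e(G)e(B)}) and (\ref{f(G)f(B)}). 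Your face-charging is a legitimate alternative bookkeeping, but as submitted it is an outline of a proof, not a proof.
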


In 2023, Gy\H ori et al. \cite{Gyori2023} determined the upper bound of $\{K_4,C_5\}$ and $\{K_4,C_6\}$.
\begin{theorem}[\cite{Gyori2023}]
Let $G$ be a $\{K_4,C_5\}$-free plane graph on $n(n\geqslant15)$ vertices, then
$$e(G)\leqslant \frac{15}{7}(n-2).$$
\end{theorem}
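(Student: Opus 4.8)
The plan is to argue by the discharging method, after reducing to the case where $G$ is $2$-connected with minimum degree at least $3$. I would set up this reduction as an induction on $n$: a vertex of degree at most $2$ can be deleted and the bound recovered from the inductive hypothesis because the coefficient $\tfrac{15}{7}$ exceeds $2$, while a cut vertex lets one split $G$ into blocks and add the per-block bounds (the elementary fact that $\tfrac{15}{7}(n-2)$ is preserved under a block decomposition). So assume $G$ is $2$-connected with $\delta(G)\ge 3$; then every face is bounded by a cycle, and writing $f$ for the number of faces, Euler's formula gives $f=e-n+2$ and $\sum_F\ell(F)=2e$. I assign to each face $F$ the charge $\mu(F)=\ell(F)-\tfrac{15}{4}$, so that the total charge is $2e-\tfrac{15}{4}f=-\tfrac{7}{4}e+\tfrac{15}{4}(n-2)$. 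Since discharging preserves the total, it suffices to move charge around so that every face finishes nonnegative: that forces $-\tfrac{7}{4}e+\tfrac{15}{4}(n-2)\ge 0$, i.e.\ $e\le\tfrac{15}{7}(n-2)$.

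The mechanism rests on a short list of forbidden local configurations. As $G$ is $C_5$-free it has no $5$-face, so the only faces of negative charge are triangles (charge $-\tfrac34$); $4$-faces carry $+\tfrac14$ and $k$-faces with $k\ge 6$ carry at least $\tfrac94$. I would prove two structural lemmas. First, no triangular face shares an edge with a $4$-face: if the $4$-face $abcd$ met a triangle along $ab$ with third vertex $p$, then either $p\notin\{c,d\}$ and $apbcda$ is a copy of $C_5$, or $p\in\{c,d\}$, which forces a degree-$2$ vertex and so is excluded by $\delta(G)\ge 3$. Second, two triangular faces sharing an edge $uv$, say $uvw$ and $uvx$, admit no further edge-adjacent triangle: a triangle glued to one of the four outer edges either creates the chord $wx$ (a $K_4$ on $\{u,v,w,x\}$) or a $C_5$ through its new apex. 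Hence every triangular face is either isolated or lies in a \emph{pair} glued along a single edge, and in both cases all of its remaining edges border faces of length at least $6$.

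The discharging rules are then immediate: every face of length at least $6$ sends $\tfrac14$ across each edge shared with an isolated triangle and $\tfrac38$ across each edge shared with a triangle in a pair, while $4$-faces send nothing. An isolated triangle receives $3\cdot\tfrac14=\tfrac34$ and a triangle in a pair receives $2\cdot\tfrac38=\tfrac34$, so every triangle finishes at $0$; $4$-faces keep their $+\tfrac14$; and a $k$-face loses at most $\tfrac38$ per edge, ending with at least $k-\tfrac{15}{4}-\tfrac{3k}{8}=\tfrac{5(k-6)}{8}\ge 0$. All final charges are nonnegative, giving the bound. The step I expect to be the real obstacle is not this accounting but the rigorous exclusion of the degenerate configurations underlying the two lemmas --- the cases in which vertices coincide and a $4$-cycle-plus-chord (a $K_4$ minus an edge) masquerades as an innocuous neighbourhood. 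These are exactly the situations that survive the naive $C_5$-count, and controlling them is what makes the reduction to $\delta(G)\ge 3$ essential; checking every adjacency of a triangle to a small face, and confirming that each such case yields a genuine $K_4$ or $C_5$ (or a forbidden low-degree vertex), is where the bulk of the careful work lies, and presumably where the hypothesis $n\ge 15$ and the matching extremal construction are calibrated.
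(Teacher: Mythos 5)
A preliminary remark: the statement you were asked to prove is quoted by this paper from \cite{Gyori2023} and is not proved here, so the only internal yardstick is the analogous argument the paper gives for its own main result, Theorem \ref{K_4,Theta_5}. Measured against that, your discharging core is correct and checks out in detail: the charge $\ell(F)-\tfrac{15}{4}$ sums to $-\tfrac{7}{4}e+\tfrac{15}{4}(n-2)$; the two structural lemmas (no triangular face shares an edge with a $4$-face; triangular faces occur singly or in pairs glued along one edge, with all remaining edges on faces of length at least $6$) are correct, including the degenerate cases; and the final accounting leaves every face nonnegative. This is a face-indexed reorganization of what the paper does with triangular blocks --- your ``pair of triangles'' is exactly the block $B_{4,b}=K_4-e$, and your charge balance mirrors the inequalities (\ref{K2<0})--(\ref{B4b<0}).

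The genuine gap is the reduction to ``$2$-connected with $\delta(G)\geqslant 3$'', where all three mechanisms you invoke fail as stated. (i) The induction on $n$ has no base: the bound $e\leqslant\tfrac{15}{7}(n-2)$ is false for small $n$ (e.g.\ $K_4$ minus an edge has $5>\tfrac{30}{7}$ edges, and $K_{2,3}$ plus the edge joining the two degree-$3$ vertices is a $\{K_4,C_5\}$-free planar graph with $7>\tfrac{45}{7}$ edges), so after deleting a degree-$\leqslant2$ vertex from a graph on $15$ vertices there is no inductive hypothesis to appeal to. (ii) The ``elementary fact'' about block decompositions is false whenever small blocks occur: $\sum_i\tfrac{15}{7}(n_i-2)\leqslant\tfrac{15}{7}(n-2)$ only helps if every block satisfies the per-block bound, and a cut edge ($1>0$), a triangle block ($3>\tfrac{15}{7}$) and a $K_4-e$ block ($5>\tfrac{30}{7}$) all violate it; such blocks coexist happily with $\delta(G')\geqslant3$. (iii) More subtly, passing to a block to gain $2$-connectivity can destroy $\delta\geqslant3$ inside the block, yet your claim that $C_5$-freeness forbids $5$-faces needs face boundaries to be cycles, while your analysis of the degenerate case of the first lemma needs $\delta\geqslant3$; the naive reduction does not deliver both simultaneously. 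The repair is exactly the bookkeeping the paper carries out after Lemma \ref{dgeq3}: track a linear functional (there $25n-11e$, here $15n-7e$) through the vertex deletions, bound it block by block with explicit constants for each small block type, and let the surplus from large blocks absorb the deficits. That computation is where the threshold $n\geqslant15$ actually comes from; it is not a one-line remark, and as written your argument does not close.
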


\begin{theorem}[\cite{Gyori2023}]
Let $G$ be a $\{K_4,C_6\}$-free plane graph on $n(n\geqslant9)$ vertices, then
$$e(G)\leqslant \frac{7}{3}(n-2).$$
\end{theorem}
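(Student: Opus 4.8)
The plan is to prove the equivalent statement $\sum_{i\ge 3}\bigl(i-\tfrac72\bigr)f_i\ge 0$ by a discharging argument on a plane embedding of $G$, where $f_i$ denotes the number of faces whose boundary walk has length $i$. First I would reduce to the case that $G$ is $2$-connected: taking a counterexample on the fewest vertices and decomposing it at a cut vertex (or into components), the bound for the smaller pieces recombines to give the bound for $G$, so a smallest counterexample is $2$-connected. Then every face is bounded by a cycle, whence $f_i=0$ for $i\le 2$ and, crucially, $f_6=0$, since a facial $6$-cycle would be a forbidden $C_6$. Combining Euler's formula $n-e+f=2$ with $f=\sum_i f_i$ and $2e=\sum_i i\,f_i$ and eliminating $n$ via $n-2=e-f$, the target $e\le\tfrac73(n-2)$ becomes exactly
\[
\sum_{i\ge 3}\Bigl(i-\tfrac72\Bigr)f_i\ \ge\ 0 .
\]
Assigning each face $F$ the charge $\mu(F)=\ell(F)-\tfrac72$, I must show the total charge is non-negative; only triangles are in deficit, with $\mu=-\tfrac12$, while $\mu\ge\tfrac12$ for every face of length $\ge 4$.

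The core is a set of local lemmas on how triangular faces may cluster, all proved by the same \emph{detour} trick: replacing an edge $uv$ of a cycle by a path $u$-$x$-$v$ through a common neighbour $x$ increases its length by one. I would establish: (i) no vertex meets four consecutive triangular faces, since their outer path plus two spokes is a $C_6$; (ii) no triangle has all three neighbours triangular, since the three apexes (necessarily distinct, else $K_4$) alternating with the triangle's vertices form a $C_6$ — hence every triangle has a neighbour of length $\ge 4$; (iii) a facial $5$-cycle has \emph{no} triangular neighbour, because detouring one pentagon edge through the apex already yields a $C_6$, and a $4$-face has at most two triangular neighbours, two being possible only when they share their apex, i.e.\ form a diamond (two distinct apexes, on adjacent or on opposite edges, again give a $C_6$).

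With these in hand the rule is simply that each triangle pulls a total of $\tfrac12$, equally, from the faces of length $\ge 4$ bordering it (there is at least one, by (ii)). It then remains to check no donor goes negative. Faces of length $5$ donate nothing, by (iii); a face of length $\ell\ge 7$ borders at most $\ell$ triangles and gives each at most $\tfrac12$, so donates at most $\tfrac{\ell}{2}\le \ell-\tfrac72$. The tight donors are the $4$-faces, whose surplus is exactly $\tfrac12$: a $4$-face bordering one triangle gives at most $\tfrac12$, while a $4$-face bordering two (a diamond) uses (iii) once more — the diamond's two remaining outer edges face length-$\ge4$ faces — so each of its two triangles has \emph{two} large neighbours and hence pulls only $\tfrac14$ from the $4$-face, for a total of $\tfrac12$ again. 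Thus every $4$-face donates at most its surplus, all charges end non-negative, and the inequality follows.

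The step I expect to be the real obstacle is making this bookkeeping airtight. The detour arguments have degenerate cases where an apex coincides with another face-vertex (producing a wheel rather than a $C_6$), which must be excluded or absorbed; more seriously, one must check globally that the ``equal pull'' rule never over-draws a $4$-face once several diamonds and $4$-faces share boundaries — in effect a Hall-type feasibility check for the flow of charge from triangles to large faces. The connectivity reduction likewise needs genuine care, because small dense blocks such as the diamond $K_4-e$ locally exceed the ratio $\tfrac73$; this is precisely where the hypothesis $n\ge 9$ enters, guaranteeing enough large-face structure to absorb them. The remaining ingredients — Euler's formula and the charge identity — are routine by comparison.
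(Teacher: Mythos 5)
This statement is quoted from \cite{Gyori2023}; the present paper contains no proof of it, so your proposal can only be judged on its own merits and against the triangular-block method that this paper (and the cited reference) uses. Your reduction to $\sum_{i\ge 3}(i-\tfrac72)f_i\ge 0$ is correct, and lemma (ii) (every triangular face has a neighbour of length $\ge 4$) is sound. But the configuration you flag as a ``degenerate case to be excluded or absorbed'' is not a removable technicality: it is a genuine counterexample to your structural lemmas and to your discharging rule. The wheel $W_4$ (a $4$-cycle $v_1v_2v_3v_4$ with a hub $x$ joined to all four rim vertices) contains neither $K_4$ nor $C_6$ (it has only $5$ vertices and no rim chord), and it is a legal subconfiguration of $\{K_4,C_6\}$-free plane graphs on $n\ge 9$ vertices. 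Embed it with the rim as a bounded face and attach a pendant path of four vertices inside the remaining triangle $xv_1v_2$ to reach $n=9$. Then the hub meets four consecutive triangular faces, refuting (i), and the $4$-face $v_1v_2v_3v_4$ has \emph{three} triangular neighbours ($xv_2v_3$, $xv_3v_4$, $xv_4v_1$), refuting your claim that a $4$-face meets at most two triangles. Worse, the middle triangle $xv_3v_4$ has the $4$-face as its \emph{only} neighbour of length $\ge 4$ and therefore pulls the full $\tfrac12$ from it, while the two flanking triangles each pull $\tfrac14$; the $4$-face is charged $1$ against a surplus of $\tfrac12$ and ends at $-\tfrac12$. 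The global inequality still holds for this graph (the long outer face has a large surplus), but your rule does not route charge from there, so the proof does not close. This is not a Hall-type bookkeeping refinement but a missing idea: clusters of triangles sharing a hub must be treated as a unit and allowed to draw on faces not incident to every one of their triangles.

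This is precisely what the triangular-block decomposition used in this paper and in \cite{Gyori2023} accomplishes: one groups all edges reachable through shared bounded $3$-faces into a single block $B$ (so $W_4$ is a single $5$-vertex block), assigns each edge the face-contribution $1/\ell_1+1/\ell_2$, and proves $af(B)-be(B)\le 0$ blockwise rather than facewise; the surrounding long faces are then automatically credited to the block through its boundary edges. Your connectivity reduction has the same flavour of gap that this paper's final argument is designed to fix: small blocks such as $K_3$, the diamond, and $W_4$ individually exceed the ratio $\tfrac73(n-2)$, so one cannot simply recombine the bound across a cut vertex; one needs the explicit per-block ledger (here, quantities of the form $an''-be''-a$) summed over the block tree, which is where the threshold on $n$ actually enters. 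As written, your argument would need the wheel analysis and a substantially revised discharging rule before it could be considered a proof.
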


Inspired by their conclusions, in this paper we discuss the planar Tur\'an problem of $\{K_4,\Theta_5\}$, and our main conclusion is as follows:
\begin{theorem}\label{K_4,Theta_5}
Let $G$ be a $\{K_4,\Theta_5\}$-free plane graph on $n~(n\geqslant 25)$ vertices, then $$e(G)\leqslant\frac{25}{11}(n-2).$$
\end{theorem}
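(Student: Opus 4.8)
The plan is to use the discharging method, which is standard for planar Turán problems of this type (as in the cited works of Ghosh et al.\ and Gy\H ori et al.). First I would fix a plane embedding of the $\{K_4,\Theta_5\}$-free graph $G$ and assume $G$ is edge-maximal and (after a reduction) $2$-connected, so that every face is bounded by a cycle. The target bound $e(G)\le \tfrac{25}{11}(n-2)$ rewrites via Euler's formula $n-e+f=2$ into a statement purely about faces: writing $e=\tfrac12\sum_{f}|f|$ and $n-2=e-f$, the inequality is equivalent to $\sum_f(|f|-\tfrac{22}{5})\le 0$ after clearing denominators, i.e.\ the ``average face length'' must be at least $\tfrac{22}{5}=4.4$. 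So the combinatorial heart is to show that triangles and $4$-faces cannot be too dense relative to the longer faces that $\Theta_5$-freeness forces to appear.

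The key structural input comes from the two forbidden subgraphs. Since $G$ is $K_4$-free, no triangle can have all three of its edges shared with other triangles in the way a $K_4$ would create, and more importantly $\Theta_5$-freeness sharply limits how $3$-faces and $4$-faces can cluster: a $\Theta_5$ is precisely a $5$-vertex graph built from a cycle plus a chord, so configurations such as two triangles sharing an edge together with certain neighboring short faces, or a $4$-cycle with a chord, are forbidden. The concrete lemmas I would establish are: (i) any two $3$-faces that share an edge force strong restrictions on their surrounding faces (otherwise a $\Theta_5$ appears), and (ii) a $4$-face cannot have a chord and its incident faces are constrained. These local lemmas translate into statements that every short face (length $3$ or $4$) must be adjacent, across its edges, to enough long faces (length $\ge 5$) to ``pay'' for its deficiency.

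With the structure in hand I would set up discharging: assign each face $f$ an initial charge $\mu(f)=|f|-\tfrac{22}{5}$ and each vertex charge $0$ (or use a vertex-charge variant $\deg(v)-\tfrac{22}{5}\cdot\text{something}$), so that the total charge is $\sum_f(|f|-\tfrac{22}{5})$, which we want to show is nonpositive. Short faces ($3$-faces with charge $-\tfrac{7}{5}$, $4$-faces with charge $-\tfrac{2}{5}$) start negative and long faces start positive; the discharging rules would move charge from long faces to their adjacent short faces along shared edges, with the rates ($\tfrac{25}{11}$ and its relatives come from optimizing these transfer rates) chosen so that after redistribution every face ends with nonpositive charge. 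Verifying this requires a case analysis on each long face according to how many of its incident faces are triangles or $4$-faces, using lemmas (i)--(ii) to bound that number.

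The main obstacle I expect is exactly this final verification: showing that after discharging \emph{every} long face retains nonpositive charge. The difficulty is that a long face of length $\ell$ could in principle be surrounded by up to $\ell$ short faces each demanding charge, and the $\Theta_5$-freeness constraints must be strong enough to forbid the worst-case packings; pinning down the precise maximum number of short neighbors a given long face can have, and handling the boundary cases where faces share vertices but not edges (creating indirect charge demands), is where the delicate combinatorics lies. This is also where the hypothesis $n\ge 25$ and the exact constant $\tfrac{25}{11}$ get used, and constructing the matching extremal examples (the tightness claim in the abstract) should reveal which configurations are the tight ones and hence guide the choice of discharging rates.
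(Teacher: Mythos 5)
Your overall strategy---turn $e(G)\leqslant\tfrac{25}{11}(n-2)$ into a face-counting inequality via Euler's formula and verify it locally using the structural consequences of $K_4$- and $\Theta_5$-freeness---is the same in spirit as the paper's, which does a triangular-block decomposition and shows $25f(B)-14e(B)\leqslant0$ for each block $B\in\{K_2,K_3,B_{4,b}\}$ (an edge-contribution reformulation of discharging). But your setup contains a concrete error: the bound is equivalent to $25f(G)\leqslant 14e(G)$, i.e.\ $\sum_f\bigl(|f|-\tfrac{25}{7}\bigr)\geqslant 0$ (average face length at least $\tfrac{25}{7}\approx 3.57$), not to $\sum_f\bigl(|f|-\tfrac{22}{5}\bigr)\leqslant 0$; your constant $\tfrac{22}{5}$ is wrong and your inequality points the wrong way (you even state both directions at once: a nonpositive sum would force the average to be \emph{at most} $\tfrac{22}{5}$). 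With the correct normalization a $4$-face already has positive charge $\tfrac{3}{7}$, so the only deficient faces are triangles, and your ``main obstacle'' largely evaporates: $\Theta_5$-freeness forbids a $3$-face from sharing an edge with a $4$-face, and forbids a $3$-face from sharing edges with two other $3$-faces, so every triangle has at least two edges on faces of length $\geqslant 5$; sending $\tfrac{2}{7}$ across each such edge leaves a face of length $\ell\geqslant5$ with $\ell-\tfrac{25}{7}-\tfrac{2\ell}{7}=\tfrac{5(\ell-5)}{7}\geqslant0$. You state the right local lemmas but never carry out this verification, so as written the proposal is a plan rather than a proof.

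The more serious gap is the reduction you wave at in one clause: ``assume $G$ is edge-maximal and (after a reduction) $2$-connected.'' Neither assumption is available for free. Adding edges to restore maximality can create a $K_4$ or $\Theta_5$, an edge-maximal $\{K_4,\Theta_5\}$-free planar graph need not be $2$-connected, and the target inequality carries an additive constant $-\tfrac{50}{11}$ that does not pass through a block decomposition without explicit bookkeeping. This is precisely where the paper does real work: it first proves the bound assuming $\delta(G)\geqslant3$, then handles general $G$ by iteratively deleting vertices of degree at most $2$ (each deletion costs at most $11\cdot 2$ against $25\cdot 1$) and summing $25n_i-11e_i-25$ over the blocks of the residual graph; the hypothesis $n\geqslant25$ enters exactly here, to absorb the constants coming from small blocks ($K_2$, $K_3$, $B_{4,a}$) and deleted vertices. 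Your proposal defers both the place where $n\geqslant25$ is used and the degenerate-face issues (bridges, cut vertices, faces whose boundary is not a cycle) to unspecified future work, so the argument is not complete even granting the corrected discharging.
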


\section{Definitions and their properties}
\begin{definition}
A vertex cut of a graph $G$ is a set $S\subseteq V(G)$ such that $G-S$ has more than one component. The connectivity of $G$ is the minimum size of a vertex $S$ such that $G-S$ is disconnected or has only one vertex. A graph $G$ is $k$-connected if its connectivity is at least $k$.
\end{definition}

\begin{definition}
A block of $G$ is a maximal connected subgraph of $G$ that has no cut-vertex.
\end{definition}

\begin{definition}\label{triblk}
Let $G$ be a plane graph and $e\in E(G)$. If $e$ is not in a bounded $3$-face of $G$, we call it a trivial triangular-block. Otherwise, we recursively construct a triangular-block in the following way. Let $E(H)=\{e\}$, start with $H$ as a subgraph of $G$.
\begin{itemize}
\item[1.] Add the other edges of $3$-face containing $e$ to $E(H)$;
\item[2.] For any edge $e'$ in $E(H)$, search for a bounded $3$-face containing $e'$. Add the other edges in this bounded $3$-face to $E(H)$;
\item[3.] Repeat step $2$ until we can no longer find a bounded $3$-face containing any edges in $E(H)$.
\end{itemize}
We denote the triangular-block obtained with $e$ as the starting edge as $B(e)$.
\end{definition}

Let $G$ be a plane graph, and the triangular-block gives a useful decomposition of the graph. We have the following observations.
\begin{itemize}
\item[1.] If $H$ is not a trivial triangular-block, and $e_1,e_2\in E(H)$, then $B(e_1)=B(e_2)=H$;
\item[2.] Any two $G$ triangular-blocks of $G$ are edge disjoint.
\end{itemize}

Let $\mathcal{B}$ be a family of triangular-blocks of the graph $G$. By the observation $2$ above, we have
\begin{equation}
e(G)=\sum_{B\in\mathcal{B}}e(B), \label{e(G)e(B)}
\end{equation}
where $e(G)$ and $e(B)$ represent the number of edges of $G$ and $B$, respectively.

Let $B_k$ be a triangular-block on $k$ vertices. In 2020, Ghosh et al. \cite{Ghosh2020T} describe all possible triangular-blocks when ~$k\in\{2,3,4,5\}$~, as shown in Figure \ref{block}. There are four types of triangular-blocks on $5$ vertices. There are two types of triangular-blocks on $4$ vertices. Note that $B_{4,a}=K_4$. The $3$-vertex triangular-blocks and the $2$-vertex triangle blocks are $K_3$ and $K_2$, respectively.

\begin{figure}[!h]
\centering
\includegraphics [width=1 \linewidth] {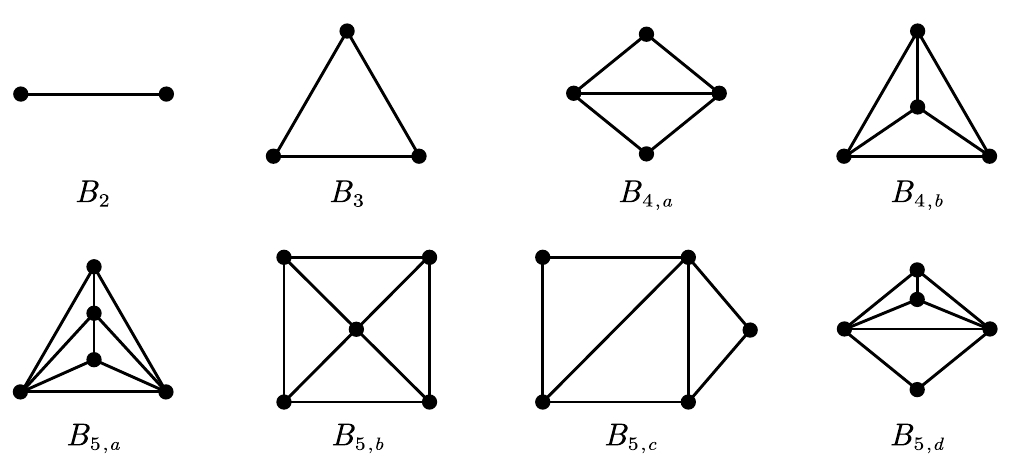}
\caption{$2,3,4,5$ vertices of triangular-blocks}
\label{block}
\end{figure}

\begin{lemma}
Let $G$ be a $\{K_4,\Theta_5\}$-free plane graph. Then $G$ contains no $k$-vertex triangular-block for all $k\geqslant5$.
\end{lemma}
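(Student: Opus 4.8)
The plan is to prove a stronger, self-contained statement: \emph{every} triangular-block $B$ on $k\geqslant 5$ vertices already contains a copy of $K_4$ or of $\Theta_5$, so no such block can occur in a $\{K_4,\Theta_5\}$-free plane graph. I would first record the two structural facts about $B$ that drive everything. By Definition~\ref{triblk}, $B$ is assembled by repeatedly gluing bounded $3$-faces along shared edges; and in any plane graph an edge lies on at most two faces, hence on at most two bounded triangles. The gluing process attaches each new triangle along a shared edge and so contributes at most one new vertex, which means that reaching $k\geqslant5$ vertices forces at least three triangles to be used. I may therefore select three of them, $T_1,T_2,T_3$, that are connected in the edge-sharing sense; since three nodes connected in the adjacency graph always have an incident pair, at least two of these triangles share an edge.

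Next I would analyse how these three triangles fit together in the plane. Say $T_1=xyz$ and $T_2=xyw$ share the edge $xy$; their apices $z,w$ are distinct (otherwise $T_1=T_2$), so $T_1\cup T_2$ is the diamond $K_4-e$ on $\{x,y,z,w\}$ with interior edge $xy$ and boundary $4$-cycle $x\,z\,y\,w$. Because $xy$ already lies on two bounded triangles, $T_3$ cannot reuse it and must be glued along one of the four boundary edges, say $yz$, with some apex $u$. The decisive dichotomy now appears. If $u\in\{x,w\}$, then since the diamond already supplies five of the six edges on $\{x,y,z,w\}$, the triangle $T_3$ supplies the missing one and produces a $K_4$. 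If instead $u$ is a genuine fifth vertex, then $B$ contains the edges $xy,yz,xz,xw,yw,yu,zu$, and the cycle $w\,x\,z\,u\,y\,w$ together with the chord $xy$ is exactly a $\Theta_5$. By the diamond's symmetry $z\leftrightarrow w$, $x\leftrightarrow y$, which acts transitively on its four boundary edges, the same conclusion holds whichever boundary edge $T_3$ is attached to.

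Combining the two cases, any triangular-block with $k\geqslant5$ vertices contains $K_4$ or $\Theta_5$, which is impossible in a $\{K_4,\Theta_5\}$-free graph; this also reproduces and cross-checks the classification of the four $5$-vertex blocks in Figure~\ref{block}, where the two $K_4$-free types (the wheel $W_4$ and the triangulated pentagon) each carry a $\Theta_5$ while the other two contain $K_4$. The step I expect to be the genuine obstacle is the planarity bookkeeping inside the gluing analysis: one must apply the ``at most two triangles per edge'' constraint to the correct interior edge $xy$, confirm that $z$ and $w$ genuinely lie on opposite sides of $xy$ so that the diamond is embedded as claimed, and verify that the third triangle attaches on the outer side of a boundary edge. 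Once the embedding is pinned down, the case check itself is short, so the care lies entirely in making the plane-geometric setup rigorous rather than in any computation.
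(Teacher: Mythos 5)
Your proof is correct, but it takes a genuinely different route from the paper's. The paper first disposes of $k=5$ by inspecting the four $5$-vertex triangular-blocks from the known classification (Figure \ref{block}): $B_{5,a}$ and $B_{5,d}$ contain $K_4$, while $B_{5,b}$ and $B_{5,c}$ contain $\Theta_5$; it then handles $k>5$ by arguing that every $k$-vertex block arises from a $(k-1)$-vertex one by attaching a further triangle, hence contains one of the four $5$-vertex blocks as a subgraph. You instead prove directly that any triangular-block containing at least three bounded triangular faces already contains $K_4$ or $\Theta_5$: two edge-sharing triangles form a diamond whose interior edge is saturated (an edge of a plane graph lies on at most two faces), so the third triangle must attach along a boundary edge, producing $K_4$ when its apex is an existing vertex and a $5$-cycle with a chord, i.e.\ $\Theta_5$, when the apex is new; since each glued triangle contributes at most one new vertex, $k\geqslant5$ forces at least three triangles, so all $k\geqslant5$ are handled uniformly. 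What your approach buys is self-containment --- it does not invoke the external classification of $5$-vertex blocks and it replaces the paper's somewhat informal induction for $k>5$ with a single local argument; what the paper's approach buys is brevity, at the cost of leaning on Figure \ref{block} and a sketchy reduction step. Two small points to tidy: in the subcase $u=x$ the triangle $T_3$ would coincide with $T_1$, so that subcase is vacuous rather than a genuine source of $K_4$ (only $u=w$ yields $K_4$); and you should state explicitly that the triangles of a triangular-block are connected under edge-sharing by construction, which is what licenses choosing $T_1,T_2,T_3$ with $T_3$ adjacent to $T_1$ or $T_2$.
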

\begin{proof}
Let $G$ be an $\{K_4,\Theta_5\}$-free plane graph, it is noted that $B_{5,a}$ and $B_{5,d}$ contain $K_4$ as a subgraph, and $B_{5,b}$ and $B_{5,c}$ contain $\Theta_5$ as a subgraph. Therefore, $G$ does not contain $5$-vertex triangular-blocks.

Let $B_{k-1,i}$ be a $(k-1)$-vertex triangular-block, $B_{k,j}$ be a $k$-vertex triangular-block, where $k>5$. If $G$ contain $B_{k-1,i}$ as a subgraph and there are some edges that allow step $2$ in Definition \ref{triblk} to continue, we add these edges to $E(H)$ until we cannot find a $3$-face for any edge in $E(H)$. Assuming that one of the newly added edges $e'=v_xv_y$, then $v_x,v_y$ cannot both be vertices on $B_{k-1,i}$, otherwise it would be contradictory to $B_{k-1,i}$ is a $(k-1)$-vertex triangular-block. Without loss of generality, we suppose that $v_x\notin V(B_{k-1,i})$, then $v_x$ and $V(B_{k-1,i})$ form a $k$-vertex triangular-block, so for the $k$-vertex triangular-block $B_{k,j}$, there is at least one vertex $v\in V(G)$ such that $G[B_{k,j}]/\{v\}$ is a $(k-1)$-vertex triangular-block. Therefore, $B_{k,j}$ contains at least one of $\{B_{5,a},B_{5,b},B_{5,c},B_{5,d}\}$ as subgraph.
\end{proof}

\begin{definition}
Let $G$ be a plane graph, and $e$ is an edge on the $G$. Let the two faces incident to $e$ have length $\ell_1$ and $\ell_2$, respectively. If $e$ is incident to only one face, let $\ell_1=\ell_2$ be the length of that face. The contribution of $e$ to the face number of $G$, denoted by $f(e)$, is defined as
$$f(e)=\frac{1}{\ell_1}+\frac{1}{\ell_2}.$$
Let $B\subseteq E(G)$ be a triangular-block. The contribution of $B$ to the face number of $G$, denoted by $f(B)$, is defined as
$$f(B)=\sum_{e\in B}f(e).$$
The contribution of $B$ to the edge number of $G$, denoted by $e(B)$, is defined as the size of $B$.
\end{definition}

Note that for any face $F$ on $G$, we have $\sum_{e\in E(F)}f(e)=1$. By the observation $2$ above, we have the face number of $G$ as follow:
\begin{equation}
f(G)=\sum_{B\in\mathcal{B}}f(B). \label{f(G)f(B)}
\end{equation}

\section{the planar Tur\'an number of $\{K_4,\Theta_5\}$}

\subsection{Extremal Graph Construction}

First, we construct a $\{K_4,\Theta_5\}$-free planar graph such that
\begin{lemma}
For every $n\equiv 24~mod~88$, there exits a $\{K_4,\Theta_5\}$-free planar graph $G$ with $e(G)=\frac{25}{11}(n-2)$.
\end{lemma}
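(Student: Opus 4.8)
The plan is to produce an explicit plane graph whose only nontrivial triangular-blocks are copies of the diamond $B_{4,b}$ and all of whose remaining faces are pentagons; a short double count shows this is exactly the regime in which the bound is met with equality. Suppose $G$ is assembled from $d$ diamond blocks, so that every edge of $G$ lies in a diamond and the bounded faces are the $2d$ triangles carried by the diamonds together with $p$ pentagons. Then $e(G)=5d$, and counting incidences between faces and edges gives
\[ \sum_{F}\deg(F)=3\,(2d)+5p=2e(G)=10d, \]
so $p=\tfrac{4}{5}d$. Writing $d=5m$ we obtain $e(G)=25m$ and $f(G)=2d+p=14m$, and Euler's formula gives $n=2+e(G)-f(G)=11m+2$; hence $e(G)=\tfrac{25}{11}(n-2)$ for free. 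In particular $11\mid(n-2)$ is forced, and $n\equiv24\pmod{88}$ is precisely the condition $m\equiv2\pmod8$, which is why the statement is confined to this residue class.

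Next I would realise these numbers by a concrete, highly symmetric map. I would first fix a base graph for the smallest admissible value $m=2$ (so $n=24$, $e=50$, with ten diamonds and eight pentagons), drawn so that the two tips of each diamond have degree $2$ and are each enclosed by a single pentagon, while the two poles — the endpoints of the spine, the edge shared by the diamond's two triangular faces — serve as the cut vertices gluing neighbouring diamonds together. The resulting picture is a spherical map in which the pentagons are pairwise separated by diamonds. To reach every $m\equiv2\pmod8$ I would then introduce a periodic annular band carrying $40$ diamonds and $32$ pentagons and splice it into the map along a suitable separating cycle; each such insertion increases $m$ by $8$ (adds $88$ vertices and $200$ edges) while reproducing the same local pattern, so iterating it covers the whole residue class.

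Verifying that $G$ is $K_4$-free is immediate: a $K_4$ is $2$-connected with all its edges in triangles, hence would be contained in a single triangular-block, and no diamond (nor any smaller block) contains $K_4$. The substance of the proof is $\Theta_5$-freeness. Since $\Theta_5$ is a triangle together with a $4$-cycle sharing exactly one edge, and since in $G$ every triangle is a face of some diamond while a diamond's own quadrilateral shares two edges with each of its triangles, no $\Theta_5$ can be built inside a single block. It therefore suffices to prove the structural claim that every $4$-cycle of $G$ lies inside one diamond. The outer edges are harmless because the tips have degree $2$, so the only danger is a $4$-cycle through a spine $uv$ that leaves the diamond; but the two faces bordering a spine are exactly the diamond's triangles, so any cycle through $uv$ other than these must run along the surrounding pentagons and hence has length at least $5$, ruling out such a $4$-cycle.

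I expect the principal obstacle to be discharging this last claim globally rather than locally. One must check that no short $4$-cycle is created along the seams where pentagons and diamonds meet — in particular where two pentagons come close — and that the inserted band closes up consistently for every $m\equiv2\pmod8$ without generating such cycles. Once the assertion that every $4$-cycle is a diamond quadrilateral is verified for the base map and shown to persist under band insertion, both forbidden-subgraph conditions follow, and the edge count equals $\tfrac{25}{11}(n-2)$ by the computation of the first paragraph.
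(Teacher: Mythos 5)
Your overall strategy is the same as the paper's: build a plane graph whose only triangular-blocks are diamonds, with every other face a pentagon, starting from a $24$-vertex, $50$-edge base and splicing in a periodic band of $88$ vertices and $200$ edges. Your opening double count is actually an improvement on the paper, which never explains where $24$ and $88$ come from. (One small correction: the identity $\sum_F\deg F=2e$ runs over \emph{all} faces, so the unbounded face must itself be one of your $p$ pentagons for $6d+5p=10d$ to be the right equation.)

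The genuine gap is the base case, which you describe but never exhibit, and whose description as given is numerically impossible. With $e=50$ and every edge in a diamond you have $d=10$ diamonds, hence $20$ tip-slots and $20$ pole-slots to distribute over $24$ vertices. If, as you stipulate, all $20$ tips are distinct degree-$2$ vertices, only $4$ vertices remain to carry all $20$ pole-slots; since two diamonds cannot share their spine edge (their union would be a single larger triangular-block, which is forbidden), the $10$ spines would have to be $10$ distinct edges on those $4$ vertices, and $\binom{4}{2}=6<10$. So the extremal graph must identify some tips with poles or with other tips, and the ``tips of degree $2$, poles as cut vertices'' picture cannot be the right one. Relatedly, your reduction of $\Theta_5$-freeness to ``every $4$-cycle lies in a single diamond'' is sound, but the justification offered only inspects the two faces at a spine, whereas a short cycle need not follow face boundaries; as you yourself acknowledge, this must be checked against the actual adjacencies of the base map and of the inserted band, neither of which you have pinned down. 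The paper discharges both points by drawing the graphs explicitly (Figures 2--5); without an explicit realization your proposal establishes the arithmetic but not the existence claim.
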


This construction comes from the fact that $B_{4,b}$ is the optimal triangular-block k in
terms of edge-face ratio, see (\ref{K2<0}), (\ref{K3<0}) and (\ref{B4b<0}). In the following, we will do induction on the construction. The infrastructure $H_0$ is shown in Figure \ref{H_0}, which has $24$ vertices and $45$ edges. Replace $B_{4,a}$ with dashed edges, as shown in Figure \ref{redl}. The inductive step is shown in the Figure \ref{H_k}, we add a layer consisting of $88$ vertices and $200$ edges to $H_0$, and we get $H_k$. Finally, we get $G_k$ from $H_k$ by adding $5$ edges, as shown in the Figure \ref{G_k}. By observing the structure of $G_k$, we can see that it only contains $B_{4,b}$ as a triangular block, and each boundary edge of this triangular-block is adjacent to a face of length $5$. In fact, $G_k$ has $88k+24$ vertices and $200k+50$ edges. When $k\geqslant0$, $200k+50 = \frac{25}{11}(88k+24-2)$. Obviously, $G_k$ does not contain $K_4$ as a subgraph, nor does it contain $\Theta_5$ as a subgraph.
\begin{figure}[!h]
\centering
\includegraphics [width=0.3 \linewidth] {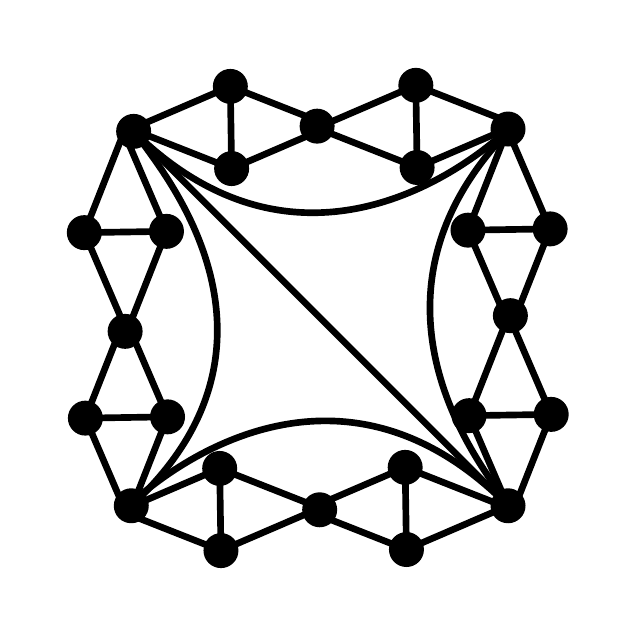}
\caption{~The base case $H_0$}
\label{H_0}
\end{figure}

\begin{figure}
\centering
\includegraphics [width=0.4 \linewidth] {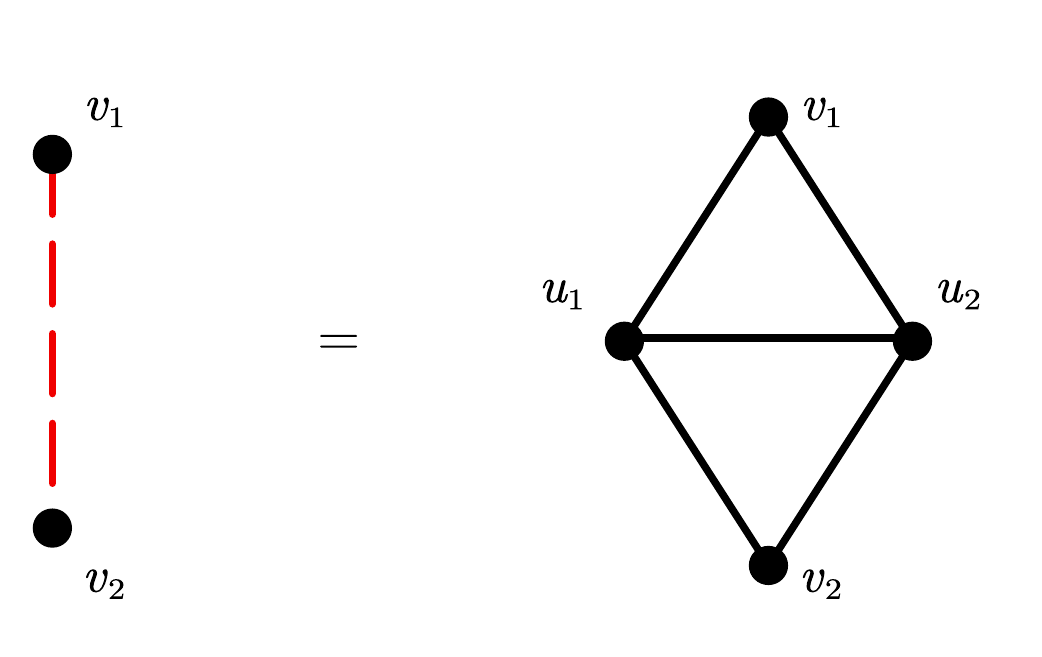}
\caption{~Replace the dashed edge with $B_{4,a}$}
\label{redl}
\end{figure}

\begin{figure}
\centering
\includegraphics [width=0.6 \linewidth] {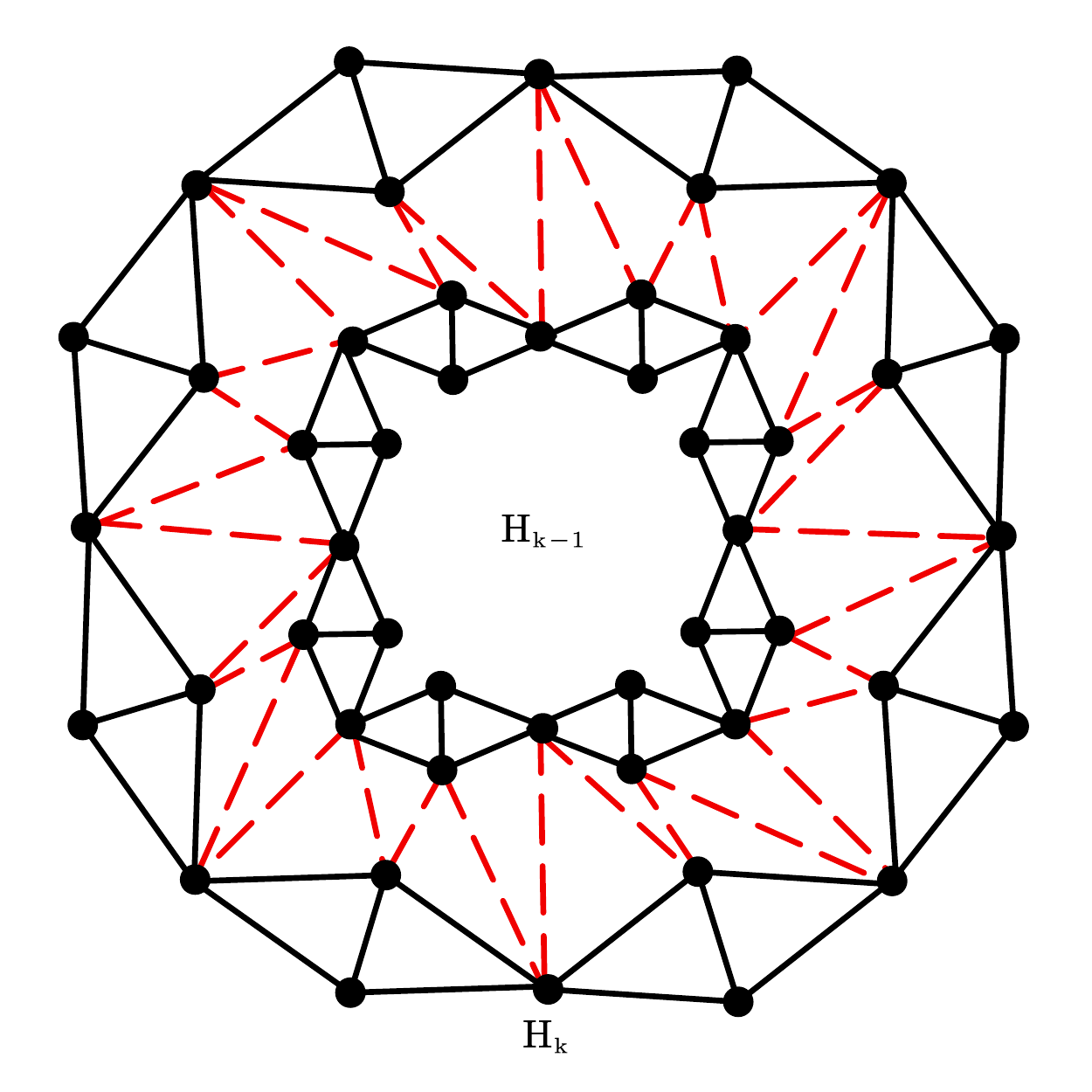}
\caption{~$H_k$ from $H_{k-1}$}
\label{H_k}
\end{figure}

\begin{figure}
\centering
\includegraphics [width=0.8 \linewidth] {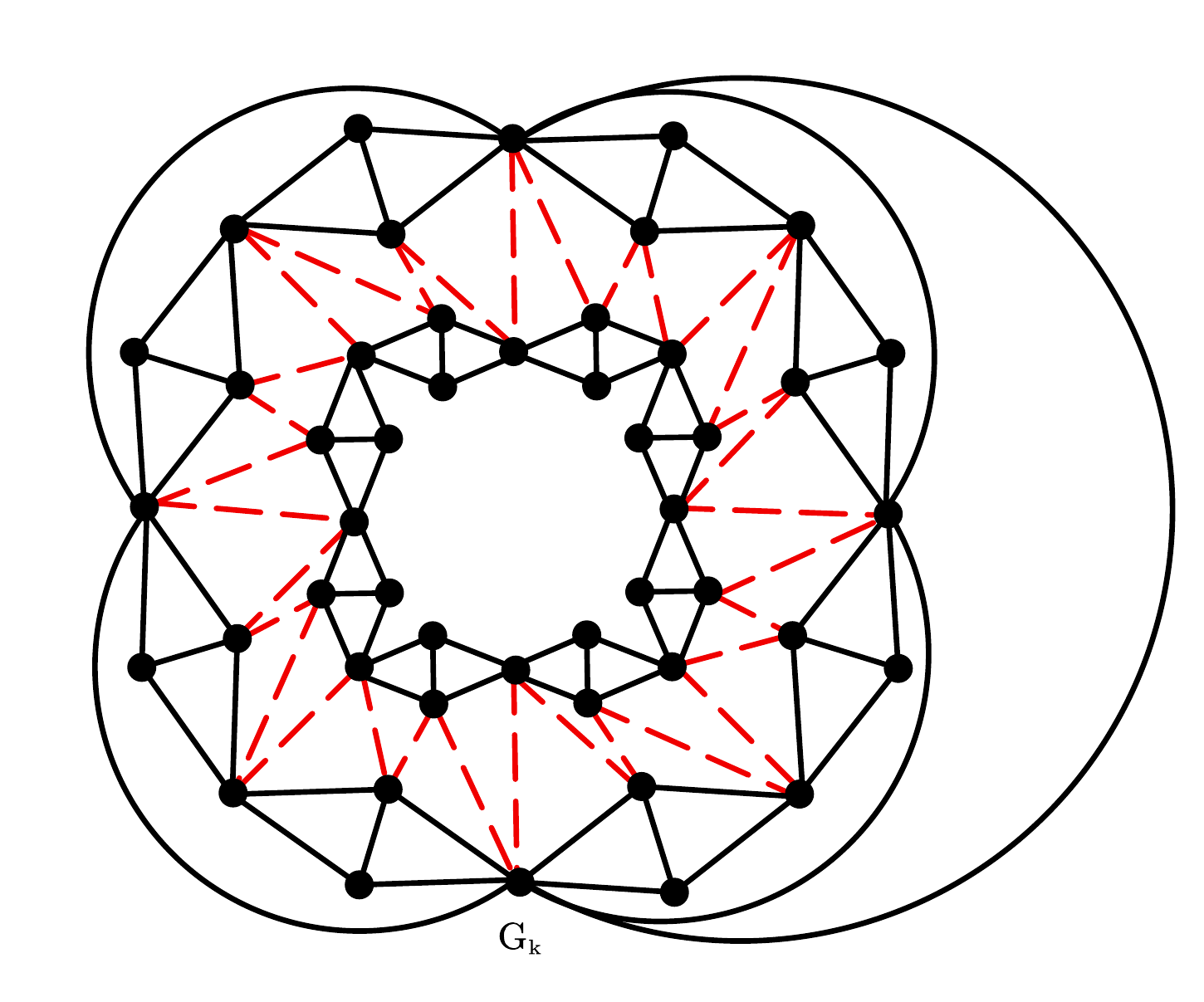}
\caption{~$G_k$ from $H_k$}
\label{G_k}
\end{figure}

\subsection{Proof of theorem \ref{K_4,Theta_5}}

We first discuss the situation where $\delta(G)\geqslant3$ is satisfied.
\begin{lemma}\label{dgeq3}
Suppose $G$ is a $\{K_4, \Theta_5\}$-free plane graph on $n~(n\geqslant5)$ vertices with $\delta(G)\geqslant3$, then
$$e(G)\leqslant \frac{25}{11}(n-2).$$
\end{lemma}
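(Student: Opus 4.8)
The plan is to combine Euler's formula with the triangular-block decomposition and to reduce everything to a single per-block inequality. Writing $f(G)$ for the total number of faces, we have $\sum_{e\in E(F)}f(e)=1$ for every face $F$, so $f(G)=\sum_{B\in\mathcal{B}}f(B)$ by (\ref{f(G)f(B)}) and $e(G)=\sum_{B\in\mathcal{B}}e(B)$ by (\ref{e(G)e(B)}). Euler's formula, in the form $f(G)=e(G)-n+1+c$ with $c\geqslant1$ the number of components, reduces the target to the single inequality $25\,f(G)\leqslant14\,e(G)$: indeed this together with $f(G)=e(G)-n+1+c$ gives $11\,e(G)\leqslant25(n-1-c)\leqslant25(n-2)$, so that multiple components only help. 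In turn it suffices to prove, for each triangular block $B$, the local inequality
$$14\,e(B)-25\,f(B)\geqslant0,$$
after which summing over $\mathcal{B}$ finishes the argument.

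Next I would classify the blocks. By the earlier lemma $G$ has no triangular block on $k\geqslant5$ vertices, and since $G$ is $K_4$-free it also avoids $B_{4,a}=K_4$; hence every block is one of $K_2$, $K_3$ or $B_{4,b}$. The heart of the proof is to pin down the lengths of the faces bordering each block, using $\Theta_5$-freeness. The key observation is that if a $3$-face shares an edge with a face of length $4$, then the triangle together with the quadrilateral forms a $\Theta_5$ (a triangle and a $4$-cycle glued along an edge is exactly the house graph), while if it shares an edge with another $3$-face the two triangles form a $B_{4,b}$, which would enlarge the block. Consequently any edge bounding a $3$-face on one side must, on its other side, border a face of length at least $5$ (unless that other side is the interior $3$-face of a $B_{4,b}$). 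For a $K_2$ block the edge lies in no $3$-face, so both incident faces have length at least $4$.

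With these bounds I would compute $f(B)$ for each type. For $K_2$, $f(B)\leqslant\tfrac14+\tfrac14=\tfrac12$, so $14\cdot1-25\cdot\tfrac12=\tfrac32>0$. For $K_3$, each of the three edges contributes at most $\tfrac13+\tfrac15$, so $f(B)\leqslant\tfrac85$ and $14\cdot3-25\cdot\tfrac85=2>0$. For $B_{4,b}$, the interior edge contributes $\tfrac13+\tfrac13$ and each of the four boundary edges at most $\tfrac13+\tfrac15$, so $f(B)\leqslant\tfrac23+4(\tfrac13+\tfrac15)=\tfrac{14}{5}$ and $14\cdot5-25\cdot\tfrac{14}{5}=0$. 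Thus all three local inequalities hold, with $B_{4,b}$ tight; this matches the extremal construction, where every block is a $B_{4,b}$ bordered only by $5$-faces, and confirms that $B_{4,b}$ is optimal in edge--face ratio.

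The step I expect to be the main obstacle is justifying the ``length at least $5$'' bound rigorously, especially for faces meeting the outer boundary and in the presence of cut vertices. The clean $\Theta_5$ argument requires the bordering face to be a genuine cycle, and it requires the apex of the triangle to be distinct from the two new vertices of the quadrilateral, so that the forbidden configuration really is a $5$-vertex $\Theta_5$ and not a $4$-vertex diamond; both points need care when $G$ is not $2$-connected, since then a face need not be a simple cycle. I would resolve this by reducing to the $2$-connected case, where every face (including the outer one) is a cycle and the bounds apply verbatim, together with a short direct case analysis of vertex coincidences, using $\delta(G)\geqslant3$ to exclude pendant edges and $n\geqslant5$ to rule out the remaining small degenerate configurations.
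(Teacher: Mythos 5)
Your proposal follows essentially the same route as the paper: reduce via Euler's formula to the per-block inequality $25f(B)-14e(B)\leqslant 0$, classify the blocks as $K_2$, $K_3$, $B_{4,b}$ using the $K_4$- and $\Theta_5$-freeness, bound the incident face lengths (in particular ruling out $4$-faces next to triangles via the house-graph/$\Theta_5$ observation), and perform the identical arithmetic, with $B_{4,b}$ tight. Your extra care about disconnected graphs and about faces failing to be cycles in the non-$2$-connected case is a welcome refinement of details the paper leaves implicit, but it does not change the argument.
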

\begin{proof}
Let $G$ be a $\{K_4,\Theta_5\}$-free plane graph on $n~(n\geqslant5)$ vertices with $\delta(G)\geqslant3$. Let the number of faces of $G$ is $f(G)$. According to Euler$'$s formula, we have $e(G)\leqslant\frac{25}{11}(n-2)$ is equivalent to $25f(G )-14e(G)\leqslant0$. Then from (\ref{e(G)e(B)}), (\ref{f(G)f(B)}), we can get that $25f(G )-14e(G)=\sum_{B\in B(G)}(25f(B)-14e(B))$, where $B$ is the triangular-block of $G$. Next, we will classify different $B$ and prove that for any $B\in B(G)$, $25f(B)-14e(B)\leqslant0$.

Since $G$ is $\{K_4,\Theta_5\}$-free and $B_{4,b}=K_4$, so $B$ has only $3$ possible types, namely $B_2$, $B_3$ and $B_{4,a}$. If $B=B_2$, since $\delta(G)\geqslant3$, this edge must be incident to two faces each with length at least $4$, then
\begin{equation}\label{K2<0}
25f(B)-14e(B)\leqslant 25\cdot(\frac{1}{4}+\frac{1}{4})-14=-1.5<0.
\end{equation}
If $B=B_3$, because $G$ is $\Theta_5$-free, then no edge of $B_3$ can be incident to a face with length $4$. Then,
\begin{equation}\label{K3<0}
25f(B)-14e(B)\leqslant 25\cdot(1+\frac{3}{5})-14\cdot3=-2<0.
\end{equation}
If $B=B_{4,b}$, no edge of $B_{4,b}$ can be adjacent to a face with length $4$, see Figure \ref{B4a-4f}. So
\begin{equation}\label{B4b<0}
25f(B)-14e(B)\leqslant 25\cdot(2+\frac{4}{5})-14\cdot5=0.
\end{equation}

\begin{figure}
\centering
\includegraphics [width=0.7 \linewidth] {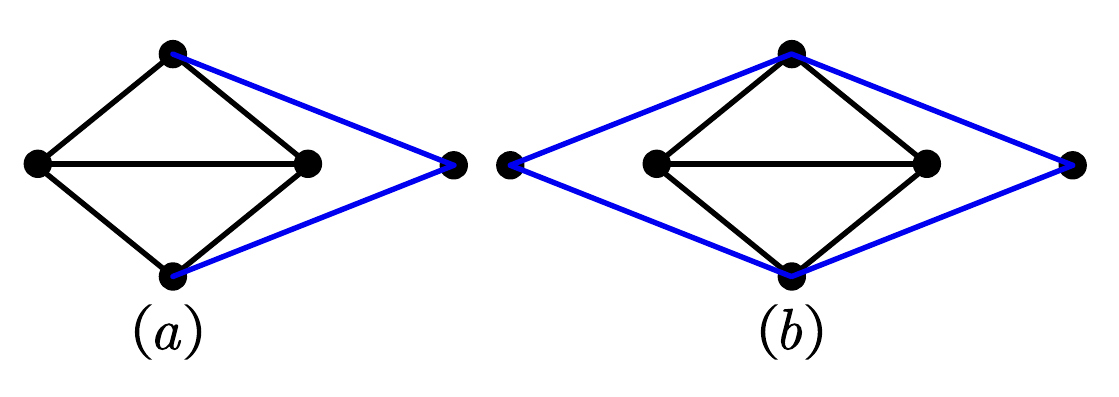}
\caption{~$4$-face be incident to $B_{4,a}$}
\label{B4a-4f}
\end{figure}

From the inequalities (\ref{K2<0}), (\ref{K3<0}) and (\ref{B4b<0}), we can get
$$25f(G)-14e(G)=\sum_{B\in B(G)}(25f(B)-14e(B))\leqslant0.$$
\end{proof}

Next, we prove the theorem \ref{K_4,Theta_5}. Let $G$ be a $\{K_4,\Theta_5\}$-free plane graph on $n~(n\geqslant25)$ vertices. We will prove that $e(G)\leqslant \frac{25}{11}(n-2)$, the above formula is equivalent to $25n-11e(G)\geqslant50$, or $n <25$. For convenience, we abbreviate $e(G)$ as $e$. For graph $G$, we define the following operation:
\begin{itemize}
    \item Delete vertex with degree at most $2$.
\end{itemize}

For the graph $G$, repeat the above operation until it can no longer go on, and the induced subgraph is $G^{\prime}$. Then $\delta(G^{\prime})\geqslant3$. Denote $|E(G^{\prime})|=e^{\prime}$, $|V(G^{\prime})|=n^{\prime}$. Each time we do this operation, denote the number of edges deleted in this process as $e_d$. Since the degree of the deleted vertex is at most $2$, so $e_d\leqslant2$ and $25\cdot1-11e_d\geqslant1 $. Therefore, we have
$$25n-11e\geqslant25n^{\prime}-11e^{\prime}+1\cdot (n-n^{\prime})$$

If $G'$ is an empty graph, since the last two vertices we delete has at most one edge between them, we have
$$25n-11e\geqslant 25\cdot2-11\cdot1+(n-2)=n+37,$$
which is greater than or equal to $50$ when $n\geqslant13$. If not, let $b$ be the number of blocks in $G^{\prime}$. In particular, let $b_2, b_3, b_4$ denote the number of blocks of size $2,3,4$ respectively. Let $b_5$ be the number of blocks of size at least $5$. Therefore we have $b=\sum^{5}_{i=2}b_i$ .
For a block of size $n^{\prime\prime}$,
when $n^{\prime\prime}\geqslant5$, since $\delta(G^{\prime})\geqslant3$, by Lemma \ref{dgeq3}, we have
$25n^{\prime\prime}-11e^{\prime\prime}-25\geqslant50-25=25$.
When $n^{\prime\prime}=4$, since the graph $G$ does not contain $K_4$ as a subgraph, as in $B_{4,a}$, we have
$25n^{\prime\prime}-11e^{\prime\prime}-25\geqslant25\cdot4-11\cdot 5-25=20$.
When $n^{\prime\prime}=3$, as in $K_3$, we have $25n^{\prime\prime}-11e^{\prime\prime}-25\geqslant25 \cdot3-11\cdot 3-25=17$.
When $n^{\prime\prime}=2$, as in $K_2$, we have
$25n^{\prime\prime}-11e^{\prime\prime}-25\geqslant25\cdot2-11\cdot 1-25=14$.

Let the number of vertices of the $i$-th block be $n_i$, and the number of edges be $e_i$. Using the lower bounds presented above, when $n\geqslant n^{\prime}\ne 0$, we have
\begin{align*}
25n^{\prime}-11e^{\prime} &\geqslant 25\left(\sum^{b}_{i=1}n_i-(b-1)\right)-11\sum^{b} _{i=1}e_i\\
&=\sum^{b}_{i=1}\left(25n_i-11e_i-25\right)+25\\
&\geqslant25b_5+20b_4+17b_3+14b_2+25.
\end{align*}
Therefore,
$$25n-11e\geqslant25b_5+20b_4+17b_3+14b_2+25+(n-n^{\prime}).$$
If $b_5\geqslant1$, we have $25n-11e\geqslant50$.
Now, we assume that $b_5=0$. It follows that $b=\sum^4_{i=2}b_i$, $n\leqslant\sum^4_{i=2}ib_i+ (n-n^{\prime}).$
Therefore,
\begin{align*}
25n-11e&\geqslant20b_4+17b_3+14b_2+25+(n-n^{\prime})\\
&\geqslant25+\sum^4_{i=2}ib_i+(n-n^{\prime})\\
&\geqslant25+n
\end{align*}
The above formula is greater than or equal to $50$ if and only if $n\geqslant25$.

\end{document}